\author{Matthew Just}
\author{Paul Pollack}
\address{Department of Mathematics\\University of Georgia\\ Athens, GA 30602}
\email{justmatt@uga.edu}
\email{pollack@uga.edu}
\title{Variations on a theme of Schinzel and W\'ojcik}
\setlist[enumerate]{label={\upshape(\roman*)}}
\subjclass[2010]{Primary 11A07, 11R11; Secondary 11A15}
\newcommand{\genlegendre}[4]{%
  \genfrac{(}{)}{}{#1}{#3}{#4}%
  \if\relax\detokenize{#2}\relax\else_{\!#2}\fi
}
\newcommand{\leg}[3][]{\genlegendre{}{#1}{#2}{#3}}
\DeclareMathAlphabet{\curly}{U}{rsfs}{m}{n}
\newtheorem{thm}{Theorem}
\newtheorem{lem}[thm]{Lemma}
\theoremstyle{remark}
\newtheorem{rmk}[thm]{Remark}
\newtheorem{rmks}[thm]{Remarks}
\theoremstyle{definition}
\begin{document}
\renewcommand{\labelenumi}{(\roman{enumi})}
\def\Ll{\mathcal{L}}
\def\N{\mathrm{N}}
\def\Aa{\mathcal{A}}
\def\I{\mathcal{I}}
\def\Q{\mathbb{Q}}
\def\O{\mathscr{O}}
\newcommand\rad{\mathrm{rad}}
\def\Z{\mathbb{Z}}
\def\F{\mathbb{F}}
\def\R{\mathbb{R}}
\def\C{\mathbb{C}}
\def\Pp{\mathcal{P}}
\def\Ss{\mathcal{S}}
\def\T{\mathbb{T}}
\def\ord{\mathrm{ord}}
\newcommand\Li{\mathrm{Li}}

\begin{abstract} Schinzel and W\'ojcik have shown that if $\alpha, \beta$ are rational numbers not $0$ or $\pm 1$, then $\ord_p(\alpha)=\ord_p(\beta)$ for infinitely many primes $p$, where $\ord_p(\cdot)$ denotes the order in $\F_p^{\times}$. We begin by asking: When are there infinitely many primes $p$ with $\ord_p(\alpha) > \ord_p(\beta)$? We write down several families of pairs $\alpha,\beta$ for which we can prove this to be the case. In particular, we show this happens for ``100\%'' of pairs $A,2$, as $A$ runs through the positive integers. We end on a different note, proving a version of Schinzel and W\'{o}jcik's theorem for the integers of an imaginary quadratic field $K$: If $\alpha, \beta \in \O_K$ are nonzero and neither is a root of unity, then there are infinitely many maximal ideals $P$ of $\O_K$ for which $\ord_P(\alpha) = \ord_P(\beta)$.
\end{abstract}
\maketitle

\section{Introduction}
Let $\alpha, \beta$ be rational numbers, not $0$ or $\pm 1$. For all but finitely many primes $p$, both $\alpha$ and $\beta$ are $p$-adic units, and so it is sensible to talk about their multiplicative orders upon reduction mod $p$. Schinzel and W\'{o}jcik \cite{sw92}, extending unpublished investigations of J.S. Wilson, J.G. Thompson, and J.W.S. Cassels, proved that there are infinitely many primes $p$ for which $\ord_p(\alpha) = \ord_p(\beta)$. Equivalently (since $\F_p^{\times}$ is cyclic), $\alpha$ and $\beta$ generate the same subgroup of $\F_p^{\times}$ infinitely often.

It is an open problem to characterize the triples $\alpha, \beta, \gamma \in \Q^{\times} \setminus \{\pm 1\}$ for which $\ord_p(\alpha)=\ord_p(\beta)=\ord_p(\gamma)$ infinitely often. But in  a recent preprint, J\"{a}rviniemi presents such a characterization not just for triples, but for tuples of any fixed length, conditional on the Generalized Riemann Hypothesis \cite{j20}. (See \cite{PS09} for earlier GRH-conditional results, and \cite{wojcik96,fouad18} for related results conditional not on GRH but on Schinzel's ``Hypothesis H'' \cite{ss58}.) Sticking instead to pairs $\alpha,\beta$ but taking the problem in a different direction, various authors have investigated the distribution of $p$ for which $\ord_p(\alpha) \mid \ord_p(\beta)$ (see \cite{MS00} and \cite{MSS19}).
%
%The method of Schinzel and W\'ojcik is ingenious but completely elementary. The essential thought, which goes back to J.S. Wilson, is to look at primes appearing in the numerator of $\alpha^n -\beta$, for carefully selected values of $n$. It seems worth exploring whether variations on this idea can be used to attack other problems in number theory. The goal of this note is to show that this is indeed the case

It is known that if $\alpha, \beta \in \Q^{\times}\setminus\{\pm 1\}$ and  $\ord_{p}(\alpha)=\ord_{p}(\beta)$ for all but finitely many primes $p$, then $\alpha = \beta$ or $\alpha=\beta^{-1}$ (see \cite{schinzel70} or \cite{CS97}). A natural complement to the theorem of Schinzel and W\'{o}jcik would be a characterization of those pairs $\alpha,\beta \in \Q^{\times}\setminus \{\pm 1\}$ for which
\begin{equation}\label{eq:lessthan} \ord_p(\alpha) > \ord_p(\beta) \qquad\text{for infinitely many primes $p$.}\end{equation}
Call the (ordered) pair $\alpha, \beta$ \textsf{order-dominant} if \eqref{eq:lessthan} holds.

Under GRH, we have a completely satisfactory classification of order-dominant pairs. Assume, as above, that $\alpha, \beta \in \Q^{\times}\setminus\{\pm 1\}$. Then $\alpha, \beta$ is order-dominant if and only if $\alpha$ is not a power of $\beta$.\footnote{The ``only if'' half is clear. For the ``if'' direction: When $\alpha, \beta$ are multiplicatively independent, J\"{a}rviniemi \cite[Theorem 1.4]{j20} proves (under GRH) that $\ord_p(\alpha)/\ord_p(\beta)$ can be made arbitrarily large, which certainly implies the order-dominance of $\alpha,\beta$. When $\alpha,\beta$ are multiplicatively dependent but $\alpha$ is not a power of $\beta$, the order-dominance of $\alpha,\beta$ follows (unconditionally) from an  elementary argument with Zsigmondy's theorem.} It seems difficult to obtain a result of comparable strength unconditionally. Our first three theorems describe partial progress. Each reports on certain families of integers $A,B$ for which we can prove the order-dominance of $A,B$ without any unproved hypothesis. We mostly (but not exclusively) restrict attention to positive integers $A,B$; this allows us to illustrate the basic methods while avoiding technical complications. As will become clear shortly, the limitations of our methods manifest already in this restricted situation; given these limitations, we have tried optimize the exposition for clarity rather than generality.

Below, $\leg{\cdot}{\cdot}$ denotes the Legendre--Jacobi--Kronecker symbol.

\begin{thm}\label{thm:dominant} \mbox{ }
\begin{enumerate}
\item Let $A, B$ be odd positive integers. Then $A,B$ is order-dominant if either
\[ \leg{-B(1-B)}{A} = -1 \quad \text{or}\quad \leg{1-B}{A} = -1.\]
\item The pair $2,B$ is order-dominant for every odd positive integer $B$.
\item The pair $A,2$ is order-dominant for every odd positive integer $A$ with $\leg{-1}{A}=-1$ or $\leg{-2}{A}=-1$, i.e., all odd positive $A \not\equiv 1\pmod{8}$.
\item If $A,B$ are coprime positive integers with $B > A^4$, then $-A,B$ is order dominant.
\end{enumerate}
\end{thm}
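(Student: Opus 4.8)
The plan is to produce, for infinitely many exponents $k$, a prime $p$ dividing $N_k := A^{2k}-B$ at which $-A$ is a quadratic nonresidue. Here is why such primes suffice. I would restrict to $k$ large enough that $N_k>0$, and note that since $\gcd(A,B)=1$ we have $\gcd(N_k,AB)=1$, so every prime $p\mid N_k$ is a prime of good reduction for both $A$ and $B$. For such a $p$ we have $B\equiv A^{2k}=(A^k)^2=(-A)^{2k}\pmod p$, so $B\in\langle(-A)^2\rangle$ inside $\F_p^\times$. Consequently $\ord_p(B)\mid\ord_p((-A)^2)=\ord_p(-A)/\gcd(2,\ord_p(-A))$. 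If in addition $\leg{-A}{p}=-1$, then $-A$ is a nonresidue, so $\ord_p(-A)$ is even and $\ord_p(B)\mid\ord_p(-A)/2$; in particular $\ord_p(-A)>\ord_p(B)$, and $p$ witnesses the order-dominance of $-A,B$. Thus it is enough to exhibit infinitely many primes $p$, arising as divisors of the $N_k$, with $\leg{-A}{p}=-1$.

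The natural first move is to evaluate the Jacobi symbol $\leg{-A}{N_k}$: if it equals $-1$, then $N_k$ automatically has a prime divisor (to odd multiplicity) at which $-A$ is a nonresidue, and we are done for that $k$. By quadratic reciprocity this symbol reduces to $\leg{-B}{A_0}$ (where $A_0$ is the odd part of $A$), because $N_k\equiv -B\pmod{A}$, together with sign factors depending only on $N_k\bmod 8$. Since $A^{2}\equiv 0$ or $1\pmod 8$, the residue $N_k\bmod 8$ is eventually constant in $k$, so $\leg{-A}{N_k}$ is eventually constant. When this constant value is $-1$, we obtain a suitable prime for every large $k$, and the only remaining issue is infinitude, addressed below.

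The substantive case is when the frozen Jacobi symbol equals $+1$; there the global symbol gives no information, and instead I must show directly that $-A$ is a nonsquare modulo $N_k$ for infinitely many $k$ (equivalently, that some prime divides $N_k$ to odd multiplicity and is inert in $\Q(\sqrt{-A})$). This is precisely where the hypothesis $B>A^4$ should be used. The plan is an elementary size (Runge-type) argument: if $-A\equiv u^2\pmod{N_k}$ with $|u|\le N_k/2$, then $u^2+A=mN_k$ for some $m\ge1$; the case $m=1$ already forces $(A^k-u)(A^k+u)=A+B$, hence $A^k\le A+B$ and $k$ bounded, and I expect $B>A^4$ to let one dispatch the remaining small values of $m$ as well, leaving no solution once $k$ is large. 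I anticipate that controlling all $m$ (not merely $m=1$) uniformly in $k$, and thereby pinning down the exact role of $B>A^4$, will be the main obstacle. Finally, for infinitude I would argue by contradiction: were there only finitely many witness primes, each nonresidue prime among them would divide $N_k$ only for $k$ in a fixed residue class, so for all $k$ outside finitely many classes $-A$ would be a square modulo $N_k$, contradicting the nonsquare statement just established. As an alternative to the elementary estimate, one could instead locate the nonresidue prime by Chebotarev in the Kummer extension $\Q(\zeta_d,(A^2)^{1/d},\sqrt{-A})$, with $B>A^4$ ensuring the multiplicative independence needed for the relevant Frobenius class to be nonempty.
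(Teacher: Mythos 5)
Your proposal addresses only part (iv) of the theorem. Parts (i)--(iii) --- the pairs $A,B$ with $\leg{-B(1-B)}{A}=-1$ or $\leg{1-B}{A}=-1$, the pairs $2,B$, and the pairs $A,2$ --- are never discussed, so even a completed version of your argument would prove a quarter of the statement. Within part (iv), your opening reduction is correct and matches the paper's: if $p \mid A^{2k}-B$ and $\leg{-A}{p}=-1$, then $\ord_p(-A) > \ord_p(B)$. But what you call the ``substantive case'' is a genuine gap, and it cannot be closed along the lines you sketch. When the frozen symbol equals $+1$, the statement you propose to prove --- that $-A$ is a nonsquare modulo $N_k = A^{2k}-B$ --- need not even be true (if $N_k$ happens to be prime, symbol $+1$ \emph{means} $-A$ is a square mod $N_k$). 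The Runge-type estimate collapses because a square root $u$ of $-A$ mod $N_k$ can be as large as $N_k/2$, so $m=(u^2+A)/N_k$ ranges up to roughly $N_k/4$: there is no finite list of ``remaining small $m$'' to dispatch, and the obstacle you flag as the main one is in fact the entire Artin-type difficulty of the problem. Your fallback via Chebotarev in the Kummer extension $\Q(\zeta_d,(A^2)^{1/d},\sqrt{-A})$ is precisely the route that requires GRH; it is not available unconditionally.

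The idea you are missing is that the paper never factors $A^{2k}-B$ itself; it factors $M_n := (A^{4+n}-B)/(B-A^4)$ for $n$ sufficiently divisible. A valuation computation ($v_p(M_n+1) = v_p(A^n-1)+v_p(A^4)-v_p(B-A^4)$) shows that $M_n$ is a positive integer with $M_n \equiv -1 \pmod{4A}$, whence $\leg{-4A}{M_n} = \leg{-4A}{-1} = -1$ \emph{unconditionally}, because $-4A$ is a negative discriminant and $\leg{-4A}{\cdot}$ is periodic mod $4A$. Thus a prime $p \mid M_n$ with $\leg{-4A}{p}=-1$ always exists, and the troublesome ``$+1$ case'' never arises: the division by $B-A^4$ replaces the uncontrollable residue $-B \bmod 4A$ by the fixed residue $-1 \bmod 4A$. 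Note also that the hypothesis $B>A^4$ enters only to make $B-A^4$ positive (so that $M_n$ is a positive integer and a symbol value of $-1$ forces a prime factor with symbol $-1$), not to fuel a size argument. Finally, your infinitude argument is shakier than necessary: avoiding finitely many residue classes of $k$ can be impossible, e.g.\ any prime dividing $\gcd(A^2-1,B-1)$ divides every $N_k$, giving a forbidden class of modulus $1$. The paper instead shows directly that any fixed prime $p$ fails to divide $M_n$ once $n$ is sufficiently divisible, which is both simpler and airtight.
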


For example, it follows from Theorem \ref{thm:dominant} and its proof (see Remark \ref{rmk:rmk1}(ii)) that if $A$ and $B$ are any of $2,3,5$, or $7$, and $A\ne B$, then there are infinitely many primes $p$ with $\ord_p(A) > \ord_p(B)$.

When $(A,B) \in \{(2, 3), (3,2), (2,5), (5,2)\}$, Theorem \ref{thm:dominant} was implicitly proved by Banaszak in \cite{banaszak98} (see the proofs of Theorems 1 and 2 in \cite{banaszak98}), although his results were not stated this way. Our proofs are essentially the same as his for these cases.

Theorem \ref{thm:dominant}(iii) leaves untouched the pairs $A,2$ with $A\equiv 1\pmod{8}$. We can show that most such pairs are order-dominant. In fact, we have the following stronger result.

\begin{thm}\label{thm:almostall} The pair $A,2$ is order-dominant for \emph{almost all} positive integers $A$, meaning that the set of exceptional $A$ has asymptotic density $0$.
\end{thm}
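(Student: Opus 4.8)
The plan is to isolate a sufficient condition for order-dominance that can be checked unconditionally and that holds for a set of $A$ of density $1$; the exceptional integers $A$ (conjecturally only the powers of $2$) will then turn out to be sparse. Since Theorem \ref{thm:dominant}(iii) already disposes, with a clean criterion, of all odd $A\not\equiv 1\pmod 8$, what remains is genuinely new only for even $A$ and for $A\equiv 1\pmod 8$; but the argument I have in mind is uniform and treats all $A$ at once.

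The basic mechanism rests on the elementary implication
\[ \ord_p(2)\mid \ord_p(A)\ \text{ and }\ \ord_p(A)\neq\ord_p(2)\ \Longrightarrow\ \ord_p(A)>\ord_p(2), \]
valid because a proper divisor is at most half of its multiple. The divisibility hypothesis is awkward in general, since it couples the valuations of $\ord_p(A)$ at \emph{all} primes; to decouple it I would restrict to primes $p$ at which $\ord_p(2)$ is itself a prime $q$. Concretely, for each prime $q$ let $p=p(q)$ be a primitive prime divisor of $2^{q}-1$, which exists for all large $q$ by Zsigmondy's theorem; then $\ord_{p}(2)=q$ and $q\mid p-1$. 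For such $p$ the condition $\ord_p(2)=q\mid\ord_p(A)$ is equivalent to the single statement that $A$ is a $q$th power non-residue modulo $p$, and (discarding the at most $q$ residue classes on which $\ord_p(A)=q$ exactly) this already forces $\ord_p(A)>\ord_p(2)$. Thus $A,2$ is order-dominant as soon as $A$ is a $q$th power non-residue modulo $p(q)$ for infinitely many primes $q$.

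To pass to a density statement I would fix one prime $p(q)$ for each $q$ and set
\[ E_q=\{A:\ A\ \text{is a }q\text{th power residue modulo }p(q)\}, \]
the set on which the construction fails to certify dominance at $p(q)$. Each $E_q$ is a union of residue classes modulo the prime $p(q)$ of density $1/q$, and the $E_q$ involve pairwise distinct prime moduli, so finite intersections have asymptotic density equal to the product of the individual densities. Since $\sum_q 1/q$ diverges while each $1/q\to 0$, one expects that almost every $A$ lies outside $E_q$ for infinitely many $q$—that is, is certified at infinitely many primes $p(q)$, hence is order-dominant—and that the exceptional set, contained in $\bigcup_{Q}\bigcap_{q>Q}E_q$, has density $0$. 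Making this precise requires choosing, for each cutoff $X$, a finite family of moduli $p(q)$ with $q\to\infty$ and $\prod p(q)\le X^{1-\delta}$, and estimating the proportion of $A\le X$ lying in every $E_q$ of the family by $\prod 1/q+O(X^{-\delta})$.

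The main obstacle is the passage from ``almost every $A\le X$ is certified at some large prime'' to ``almost every $A$ is certified at infinitely many primes.'' Order-dominance is an \emph{infinitely often} assertion, whereas asymptotic density is only finitely additive, so the Borel--Cantelli heuristic invoked above is not automatic: an increasing union $\bigcup_{Q}\bigcap_{q>Q}E_q$ of density-zero sets can a priori have positive density. Overcoming this is the delicate technical heart of the theorem, and is exactly why the conclusion is ``almost all'' rather than ``all''; it forces the quantitative, $X$-dependent bookkeeping sketched above, together with a separate estimate controlling those $A\le X$ whose last certifying prime $p(q)$ has $q$ large compared with $X$. A secondary difficulty is that, because the primes with $\ord_p(2)=q$ form only a finite set (the divisors of $2^q-1$), there is no Chebotarev density governing the class of $A$ modulo $p(q)$, so one has no freedom beyond the single prime furnished by Zsigmondy and must let the independence of the moduli $p(q)$ carry the entire weight of the argument.
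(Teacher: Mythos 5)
Your certification step is sound: if $A$ is a $q$th power non-residue modulo a primitive prime divisor $p(q)$ of $2^q-1$ and $A\bmod p(q)$ does not have order exactly $q$, then $q\mid \ord_{p(q)}(A)$ and $\ord_{p(q)}(A)\ne q$, so $\ord_{p(q)}(A)>q=\ord_{p(q)}(2)$; and distinct $q$ give distinct primes $p(q)$. The genuine gap is exactly the obstacle you name and then do not overcome: bounding the density of the exceptional set $\mathcal{E}=\bigcup_{Q}\bigcap_{q>Q}E_q$. Your sieve bookkeeping controls, at scale $X$, only those $A\in\mathcal{E}$ whose threshold $Q(A)$ is below the cutoff $Q_0$, since only those are forced into $\bigcap_{Q_0<q\le R}E_q$ for the moduli you can afford (in the worst case $p(q)=2^q-1$, so keeping $\prod p(q)\le X^{1-\delta}$ limits you to primes $q\ll\sqrt{\log X\log\log X}$). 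For $A$ with $Q(A)$ large compared with $\log X$, every condition defining membership in $\mathcal{E}$ lives at moduli invisible at scale $X$, and the ``separate estimate'' you postulate for these $A$ cannot be extracted from your setup: the events $E_q$ at distinct $q$ involve distinct prime moduli and are genuinely independent by the Chinese remainder theorem, so no information at moduli of size $X^{O(1)}$ constrains the tail event ``$A\in E_q$ for all $q>Q$'' once $Q$ exceeds $\log X$. The independence you describe as carrying ``the entire weight of the argument'' is precisely what defeats it: a tail event over independent coordinates is undetectable in any finite window, and since density is only finitely additive there is no unconditional surrogate for Borel--Cantelli here.

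What the paper does instead, and what your framework is missing, is a certification whose failure condition is \emph{periodic}, so that the tail event collapses to a finite-window event. The paper certifies $A$ by $\leg{A}{F_n}=-1$ for Fermat numbers $F_n=2^{2^n}+1$: a prime $p\mid F_n$ (with $n\ge 2$) has $\ord_p(2)=2^{n+1}$ and $2^{n+2}\mid p-1$, so $\leg{A}{p}=-1$ forces $v_2(\ord_p(A))=v_2(p-1)$ and hence $\ord_p(A)\ge 2^{n+2}>\ord_p(2)$. Crucially, by quadratic reciprocity $\leg{A}{F_n}=\leg{F_n}{A_1}$ (with $A_1$ the odd part of $A$), and $F_n\bmod A_1$ depends only on $2^n\bmod \lambda(A_1)$; writing $t=v_2(\lambda(A_1))$, the sequence $\{\leg{A}{F_n}\}_{n\ge t+2}$ is purely periodic in $n$. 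Therefore ``$\leg{A}{F_n}=1$ for all large $n$'' is equivalent to ``$\leg{A}{F_n}=1$ for all $n$ in the window $t+2\le n<T$,'' where $T\approx\log_2(\log x/\log\log x)$ and the total modulus $\prod_{n<T}F_n$ is $x^{o(1)}$; that finite-window condition is then counted by Brun--Titchmarsh and trivial character-sum bounds, giving $O(x(\log\log x)^3/\log x)$ exceptions. This tail-to-window reduction is the missing idea, and it is unavailable for $q$th power residuosity modulo the unrelated Zsigmondy primes $p(q)$: no reciprocity law converts ``$A$ is a $q$th power residue mod $p(q)$'' into a condition periodic in $q$. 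As it stands, your argument proves only that each set $\bigcap_{q>Q}E_q$ has density zero, which, as you yourself observe, does not bound the density of their union.
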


\noindent (Note that Theorem \ref{thm:almostall}, unlike Theorem \ref{thm:dominant}(iii), allows $A$ to be even.) The proof of Theorem \ref{thm:almostall} begins by establishing an explicit (though slightly technical) sufficient condition for $A,2$ to be order-dominant, involving properties of Fermat numbers. The $A$ for which this condition fails, which we term \textsf{anti-elite numbers}, are then shown to be rare. See Remark \ref{rmk:rmk2} for the list of anti-elite $A$ up to 150.

The proofs of Theorems \ref{thm:dominant} and \ref{thm:almostall}, when they succeed, prove more than the order-dominance of $\alpha, \beta$. For all the pairs handled there, what is actually proved is that for infinitely many primes $p$, the ratio $\ord_p(\alpha)/\ord_p(\beta)$ is a positive even integer. Evenness stems from the fact that the primes $p$ we produce have $\alpha$ not a square modulo $p$, which we detect by quadratic reciprocity. One might hope to use higher reciprocity laws to generate further examples of order-dominant pairs. Our next theorem, whose proof depends on cubic reciprocity, is a modest step in this direction.

\begin{thm}\label{thm:cubicreciprocity} Let $A$ be an integer for which $3\nmid A$ and $A^2\not\equiv 1\pmod{9}$. For infinitely many primes $p$, the ratio $\ord_p(A)/\ord_p(-3)$ is an integer multiple of $3$. Thus, both $A,-3$ and $A,3$ are order dominant.
\end{thm}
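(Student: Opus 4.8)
The plan is to reduce the theorem to a statement about cubic non-residues among the prime divisors of an explicit sequence $A^{3k}+3$, reminiscent of the Fermat-number argument used for Theorem~\ref{thm:almostall}, and then to control those non-residues by cubic reciprocity.

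First I would record the following reduction. Fix a prime $p\nmid 3A$ dividing $A^{3k}+3$ for some $k\ge1$, so that $A^{3k}\equiv -3\pmod p$. Then $-3=(A^{k})^{3}\in\langle A\rangle$ inside $\F_p^{\times}$, and
\[ \frac{\ord_p(A)}{\ord_p(-3)}=\frac{\ord_p(A)}{\ord_p(A^{3k})}=\gcd(3k,\ord_p(A)). \]
This quotient is a positive integer multiple of $3$ exactly when $3\mid\ord_p(A)$, i.e.\ exactly when $A$ is a cubic non-residue modulo $p$ (which in particular forces $p\equiv1\pmod 3$). Thus it suffices to produce infinitely many primes $p$, each dividing some $A^{3k}+3$, for which $A$ is a cubic non-residue. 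Granting this, the order-dominance of $A,-3$ is immediate from $\ord_p(A)\ge 3\,\ord_p(-3)$; and since $(-3)^2=3^2$ gives $\tfrac12\ord_p(3)\le\ord_p(-3)\le 2\,\ord_p(3)$, we also get $\ord_p(A)\ge \tfrac32\ord_p(3)>\ord_p(3)$, so $A,3$ is order-dominant as well.

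Next I would translate the residue condition into $\Z[\omega]$, where $\omega=e^{2\pi i/3}$. For a split prime $p=\pi\bar\pi$ with $\pi$ primary, $A$ is a cubic non-residue modulo $p$ precisely when the cubic residue symbol $\left(\frac{A}{\pi}\right)_{3}\neq 1$. Cubic reciprocity, together with its supplements for the units and for $\lambda=1-\omega$, expresses $\left(\frac{A}{\pi}\right)_{3}$ through congruence conditions on $\pi$ modulo $9\,\rad(A)$. Here the hypotheses enter decisively: $3\nmid A$ and $A^2\not\equiv1\pmod 9$ say exactly that $A$ is not a cube modulo $9$, hence not a cube in $\Q(\omega)$, so that $M:=\Q(\omega,A^{1/3})$ is a genuine cubic extension ramified at $\lambda$; moreover $3\nmid A$ forces $M\neq N:=\Q(\omega,(-3)^{1/3})$ (a comparison of $\lambda$-adic valuations rules out $A\equiv(-3)^{\pm1}\pmod{(\Q(\omega)^\times)^3}$). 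Consequently $\pi\mapsto\left(\frac{A}{\pi}\right)_3$ is a nontrivial cubic character, and there are admissible residue classes forcing its value to be a primitive cube root of unity.

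The crux---and the step I expect to be the main obstacle---is to marry this character condition with the divisibility condition ``$p\mid A^{3k}+3$ for some $k$'', which is the non-congruence condition $-3\in\langle A^3\rangle$ in $\F_p^\times$. The divisibility condition already forces $-3$ to be a cube modulo $p$, i.e.\ forces $p$ to split completely in $N$; so what must be shown is that the primes splitting in $N$ that divide the sequence do not all split in $M$. I would argue by contradiction: if $A$ were a cube modulo all but finitely many such $p$, a Chebotarev density comparison would force $M\subseteq N$, contradicting $M\neq N$. Turning this sketch into a proof requires (i) checking that the congruence classes modulo $9\,\rad(A)$ produced by reciprocity are compatible with $-3$ being a cube, using the supplementary laws at $\lambda$, and (ii) verifying, via Dirichlet's theorem for $\Q(\omega)$ (equivalently Chebotarev for the appropriate ray class field), that such classes are actually hit by primes dividing some $A^{3k}+3$. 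Making (i) and (ii) precise---disentangling the cubic character of $A$ from the orbit condition that places $-3$ in $\langle A^3\rangle$---is where the real work lies.
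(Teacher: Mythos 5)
Your opening reduction is correct (with one harmless overstatement: $A$ being a cubic non-residue mod $p$ is \emph{sufficient} for $3\mid\ord_p(A)$ but not equivalent to it --- when $9\mid p-1$, an element of order $3$ is itself a cube --- though you only use the sufficient direction). The genuine gap is exactly at the step you flag as the crux, and the Chebotarev route you sketch cannot close it. The condition ``$p$ divides some $A^{3k}+3$,'' i.e.\ $-3\bmod p\in\langle A^3\bmod p\rangle$, is not a Frobenius or splitting condition in any fixed extension: it depends on the index of $\langle A\bmod p\rangle$ in $\F_p^{\times}$, not only on how $p$ splits in $M$, $N$, or any ray class field. So the proposed contradiction ``if $A$ were a cube modulo all but finitely many such $p$, a Chebotarev density comparison would force $M\subseteq N$'' is a non sequitur: Chebotarev does produce a positive density of primes splitting completely in $N$ but not in $M$, but it gives no handle on whether \emph{any} of those primes divides your sequence, since that requires $-3$ to lie in the possibly much smaller subgroup $\langle A^3\rangle$ rather than merely in the group of all cubes. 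Producing primes satisfying such a subgroup-membership condition together with a prescribed splitting condition is precisely a two-variable Artin-type problem (cf.\ \cite{MS00,MSS19}), open unconditionally; your items (i) and (ii) cannot be carried out by these means.

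The paper sidesteps all density considerations by choosing a \emph{modulus that itself divides the sequence}. Work in $\Z[\zeta]$, $\zeta=e^{2\pi i/3}$, $\lambda=1-\zeta$, and for $n$ sufficiently divisible (and $A$ even, say) put $\beta=A^{n/2}-\sqrt{-3}$, an element of norm $A^n+3$. Eisenstein's cubic reciprocity law, together with its supplementary laws, yields the exact evaluation
\[ \leg{A}{\beta}_3 \;=\; \leg{\lambda}{A}_3 \;=\; \zeta^{(A^2-1)/3}, \]
and the hypothesis $A^2\not\equiv 1\pmod{9}$ says precisely that the exponent is not a multiple of $3$, so $A$ is not a cube modulo $\beta$. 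Since (for even $A$) the coordinates of $\beta$ in the basis $1,\zeta$ are coprime, one has $\Z[\zeta]/(\beta)\cong\Z/(A^n+3)\Z$, so $A$ is not a cube modulo the rational integer $A^n+3$; by Hensel's lemma and the Chinese remainder theorem, $A$ must then fail to be a cube modulo \emph{some} prime $p\mid A^n+3$, and your own reduction (with $3\mid n$) finishes from there. (For odd $A$ one takes $\beta=\frac{1}{2}(A^{n/2}-\sqrt{-3})$ instead.) The idea you were missing is that reciprocity lets one evaluate the cubic character of $A$ against the entire integer $A^n+3$ at once, so one never needs to locate or classify the individual prime divisors --- only to observe that they cannot all see $A$ as a cube.
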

\noindent (To see the claim about $A,3$, observe that $\ord_p(3)$ is at most twice $\ord_p(-3)$, and so at most two-thirds of $\ord_p(A)$.) Unfortunately, the proof of Theorem \ref{thm:cubicreciprocity} is not very amenable to generalization, although certain other pairs with $B=3\square$ (i.e., $3$ times a square) could be treated in a similar fashion. Analogously, the law of biquadratic reciprocity could be used to establish order-dominance of certain pairs $A,B$ with $B=-\square$.

One consequence of Theorem \ref{thm:cubicreciprocity} is that the pair $4,3$ is order-dominant. This could certainly not be proved by the methods of Theorem \ref{thm:dominant} or \ref{thm:almostall}, since $4$ is a square modulo every  $p$.

Theorems \ref{thm:dominant}, \ref{thm:almostall}, and \ref{thm:cubicreciprocity} (as well as their methods of proof) still leave us quite far from the GRH-conditional characterization of order dominant pairs. An interesting, difficult-seeming test case is the problem of proving that
\[ \ord_{p}(17) > \ord_{p}(2) \qquad\text{for infinitely many primes $p$}. \]
We hope that interested readers will take up this challenge!

Our final theorem is of a quite different nature. We prove the analogue of Schinzel and W\'{o}jcik's result for the integers of an imaginary quadratic field.

\begin{thm}\label{thm:quadratic} Let $K$ be an imaginary quadratic field with ring of integers $\O_K$. For nonzero $\alpha, \beta \in \O_K$, neither of which is a root of unity, there are infinitely many prime ideals $P$ of $\O_K$ for which $\alpha$ and $\beta$ generate the same subgroup of $(\O_K/P)^{\times}$.
\end{thm}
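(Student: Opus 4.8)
The plan is to transport the Chebotarev-theoretic engine behind Schinzel and W\'{o}jcik's original argument, the decisive structural point being that an imaginary quadratic $K$ has a \emph{finite} unit group $\O_K^{\times}$ (only roots of unity), exactly parallel to $\Z^{\times}=\{\pm 1\}$. This keeps the ambient Kummer theory finite-dimensional and is precisely what lets the rational proof carry over; over a field of positive unit rank the radicals $\alpha^{1/n}$ would interact with infinitely many independent units and the method would break, which explains the imaginary quadratic hypothesis. Since each residue field $\O_K/P$ is finite and hence has cyclic unit group, the equality $\langle\alpha\rangle=\langle\beta\rangle$ in $(\O_K/P)^{\times}$ is equivalent to $\ord_P(\alpha)=\ord_P(\beta)$, equivalently to the equality of indices $i_\alpha:=[(\O_K/P)^{\times}:\langle\alpha\rangle]=i_\beta$. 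Discarding the finitely many $P$ that ramify, divide $2$, or divide $\alpha$ or $\beta$, I would split on the rank $r\in\{1,2\}$ of the free part of $\Gamma:=\langle\alpha,\beta\rangle\subseteq K^{\times}$; since neither $\alpha$ nor $\beta$ is a root of unity, $r\ge 1$.

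First the multiplicatively dependent case $r=1$. Here $\alpha=\zeta\,\rho^{a}$ and $\beta=\zeta'\rho^{b}$ for some $\rho\in K^{\times}$ of infinite order, roots of unity $\zeta,\zeta'\in\O_K^{\times}$, and nonzero integers $a,b$. Using Chebotarev in the fields $K(\zeta_n,\rho^{1/n})$ I would produce a positive density of $P$ for which $\ord_P(\rho)$ is coprime to the fixed integer $ab\,w$, where $w=|\O_K^{\times}|$. For such $P$ the root-of-unity factors contribute nothing to the order, so $\ord_P(\alpha)=\ord_P(\rho)=\ord_P(\beta)$, settling this case.

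The heart is the independent case $r=2$. The engine is the Chebotarev density theorem applied to the tower of Kummer extensions $K(\zeta_{\ell^{k}},\alpha^{1/\ell^{k}},\beta^{1/\ell^{k}})$, whose Frobenius classes read off, for each prime $\ell$, the valuations $v_\ell(i_\alpha)$ and $v_\ell(i_\beta)$. To force $i_\alpha=i_\beta$ I would combine two ingredients. (a) For the finitely many primes $\ell\le L$, I would prescribe a Frobenius class making $v_\ell(i_\alpha)=v_\ell(i_\beta)$ (concretely, one may impose $\ell\,\|\,\N(P)-1$ together with $\alpha,\beta$ both $\ell$-th power residues, giving common valuation $1$); each is a condition on Frobenius in a finite extension, and their conjunction still isolates a positive-density set of $P$. (b) For the primes $\ell>L$, I would show that outside a sparse set of $P$ one has $v_\ell(i_\alpha)=v_\ell(i_\beta)=0$: the density of $P$ with $\ell\mid i_\alpha$ equals $1/[K(\zeta_\ell,\alpha^{1/\ell}):K]\approx 1/\ell^{2}$, so the sum of these densities over $\ell>L$ is $O(1/L)$, negligible for large $L$. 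Subtracting the sparse bad set from the positive main term of (a) leaves infinitely many $P$ with $i_\alpha=i_\beta$, hence $\ord_P(\alpha)=\ord_P(\beta)$. Crucially this route does \emph{not} demand that $\alpha,\beta$ be simultaneous primitive roots (whose infinitude is not known unconditionally); it only matches the two indices, allowing their common value to carry small prime factors.

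I expect the main obstacle to be the Kummer degree computation underpinning step (a): one must verify that $[K(\zeta_{\ell^{k}},\alpha^{1/\ell^{k}},\beta^{1/\ell^{k}}):K(\zeta_{\ell^{k}})]=\ell^{2k}$ up to a bounded, explicitly understood defect, so that the prescribed Frobenius classes are genuinely attainable and the densities are positive. This is exactly where the arithmetic of $K$ intrudes: the roots of unity actually present in $K$ (for instance $\zeta_4\in\Q(i)$ or $\zeta_3\in\Q(\sqrt{-3})$) entangle the cyclotomic and Kummer parts, and any hidden relation of the shape $\alpha=(\text{unit})\cdot\square$ changes the degree and must be tracked, just as the prime $2$ and the sign $-1$ require separate care over $\Q$. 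A secondary technical point is making the large-$\ell$ tail bound in (b) unconditional and uniform, which I would handle by the standard effective (or elementary) upper bounds for the number of primes splitting completely in the Kummer fields, these being all that an upper bound for the bad set requires.
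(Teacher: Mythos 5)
Your proposal does not reproduce the paper's argument, and as written it contains gaps that I do not believe can be repaired; indeed the paper's proof is deliberately elementary and density-free precisely because of the obstruction you run into. The decisive problem is in your independent case, in the interplay of steps (a) and (b). First, step (b) is not ``a secondary technical point'': bounding, unconditionally and uniformly in $\ell$, the number of prime ideals $P$ with $\N(P)\le x$ such that $\ell\mid i_\alpha$ for some $\ell$ in the middle range (say $(\log x)^{2}<\ell<x^{1/2-\epsilon}$) is exactly the obstruction that forces GRH into Hooley's treatment of Artin's primitive root conjecture and into the two-variable Artin results of Moree--Stevenhagen cited in the introduction. Unconditional effective Chebotarev reaches only $\ell$ of size a small power of $\log x$, and the elementary count (primes dividing $\prod_{m\le x/\ell}(\alpha^{m}-1)$) covers only $\ell$ near $x^{1/2}$ and beyond; nothing unconditional fills the gap between them. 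Second, even granting (b), the quantities do not add up. Your condition in (a) --- $\ell\,\|\,\N(P)-1$ together with $\alpha,\beta$ both $\ell$-th power residues --- has density roughly $\ell^{-3}$ for each $\ell$, so the conjunction over $\ell\le L$ has density about $\prod_{\ell\le L}\ell^{-3}$, which is exponentially small in $L$, while the bad set from (b) has density on the order of $1/L$ (and, for small $L$, a fixed positive constant). The main term is smaller than the tail for \emph{every} choice of $L$, so the subtraction yields nothing. The natural repair --- impose $v_\ell(i_\alpha)=v_\ell(i_\beta)=0$ for all $\ell\le L$, which does keep the main term bounded below uniformly in $L$ --- turns the target into ``$\alpha$ and $\beta$ are simultaneously primitive roots up to bounded index,'' i.e., essentially the two-variable Artin conjecture, which is open unconditionally.

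The dependent case is also incorrect as stated. If $\ord_P(\rho)$ is coprime to $w$, then (for all but finitely many $P$) reduction is injective on the roots of unity of $K$, and in the cyclic group $(\O_K/P)^{\times}$ one gets $\ord_P(\zeta\rho^{a})=\ord(\zeta)\cdot\ord_P(\rho^{a})$: the torsion factor \emph{multiplies} the order rather than disappearing. Concretely, in $K=\Q(i)$ take $\alpha=2$, $\beta=2i$; for the primes your recipe produces ($\ord_P(2)$ odd) one has $\ord_P(\beta)=4\,\ord_P(\alpha)$, never equality. For contrast, the paper's proof needs no case split and no Chebotarev: with $I$ the largest ideal divisor of $(\beta-\alpha)$ coprime to $(\alpha)$, it takes a prime $\ell$ with $\ell+1$ sufficiently divisible (in particular $\ell\equiv-1\pmod{\Delta}$, so that $\alpha^{\ell}\equiv\bar{\alpha}\pmod{\ell}$), and uses the identity $\N(\beta\bar{\alpha}-1)-\N(\beta-\alpha)=(\N\alpha-1)(\N\beta-1)>0$ to show the integral ideal $(\beta\alpha^{\ell}-1)/I$ has a prime divisor $P$ with $\N(P)\not\equiv 1\pmod{\ell}$; then $\beta\equiv\alpha^{-\ell}\pmod{P}$ and $\ell\nmid\#(\O_K/P)^{\times}$ give $\langle\beta\rangle=\langle\alpha^{-\ell}\rangle=\langle\alpha\rangle$ directly. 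Note also that the imaginary hypothesis enters there through the compatibility of ``$\Delta\mid\ell+1$'' with $\leg{\Delta}{-1}=-1$ and through positivity of norms, not through finiteness of Kummer degrees as your opening paragraph suggests.
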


For example, $1+i$ and $2+i$ generate the same subgroup of $(\Z[i]/(\pi))^{\times}$ for infinitely many Gaussian primes $\pi$.

While the proof of Theorem \ref{thm:quadratic} follows the same basic strategy as \cite{sw92}, there are essential differences. It is important for us to have available auxiliary primes $\ell$ for which the $\ell$th power map, mod $\ell$, is induced by a nontrivial automorphism of $K$. In fact, we will use that all primes $\ell\equiv -1\pmod{\Delta}$ have this property, where $\Delta$ is the discriminant of $K$; this explains the requirement in the theorem that $K$ is imaginary.

It would be interesting to relax the restriction in Theorem \ref{thm:quadratic} that $\alpha$ and $\beta$ be integers of the field $K$. While our method of proof works for many pairs of nonintegral $\alpha,\beta \in K$, an elegant general statement does not seem forthcoming by these arguments.

\subsection*{Notation and conventions} Since $\ord_P(\cdot)$ is being used for the multiplicative order mod $P$, the $P$-adic valuation will be denoted $v_P(\cdot)$. We use $\lambda(\cdot)$ for Carmichael's function; that is, $\lambda(n)$ is the exponent of the multiplicative group mod $n$. We write $\langle g\rangle$ for the cyclic subgroup generated by a group element $g$.

We say that a statement about positive integers $n$ holds \textsf{whenever $n$ is sufficiently divisible} if there is a positive integer $K$ such that the statement holds for all $n$ divisible by $K!$. Note that if each of two statements holds whenever $n$ is sufficiently divisible, then their conjunction holds for all sufficiently divisible $n$. One should think of the requirement that $n$ be sufficiently divisible as analogous to the condition, in real analysis, that $\epsilon$ be sufficiently close to $0$. In fact, this is a bit more than an analogy: Asking that $n$ be sufficiently divisible amounts precisely to asking that $n$ be close enough to $0$ in $\hat{\Z}$, the profinite completion of the integers.

The requirement of sufficient divisibility will come up in the following way. We have a commutative ring $R$, an ideal $I$, and an element $A \in R$ that is invertible modulo $I$. Then $A^n \equiv 1\pmod{I}$ whenever $n$ is sufficiently divisible. Of course, it is simple enough here to say that the congruence holds whenever $n$ is divisible by $\#(R/I)^{\times}$. But later it will be convenient to suppress explicit mention of the required divisibility conditions.

\section{First examples of order-dominant pairs: Proof of Theorem \ref{thm:dominant}}\label{sec:proofdominant}
Suppose that $p$ is a prime with $\leg{A}{p}=-1$ and that $p$ divides $A^{n}-B$ for some even positive integer $n$. Since $B \equiv A^{n} \equiv (A^{n/2})^2\pmod{p}$, we see that
\begin{itemize} \item $B$ is in the subgroup generated by $A$ mod $p$, \emph{and}
\item $B$ is a square mod $p$.
\end{itemize} Since $A$ is not a square mod $p$, it cannot be that $A$ is in the subgroup generated by $B$ mod $p$. Hence, $\langle B\bmod{p}\rangle \subsetneq \langle A\bmod{p}\rangle$, and $\ord_p(A) > \ord_p(B)$. So to prove $A,B$ is order-dominant, it suffices to produce infinitely many primes $p$ of this kind.

Consider the situation where $A,B$ are odd and positive with $\leg{-B(1-B)}{A}=-1$. Then $A$ is coprime to both $B$ and $1-B$. We will locate primes $p$ with $\ord_p(A) > \ord_p(B)$ from among the prime divisors of
\[ \frac{A^{n}-B}{B-1}, \]
for suitably chosen positive integers $n$. Loosely speaking, what we show is that as $n$ gets more and more divisible, our procedure reveals larger and larger primes $p$ with $\ord_p(A) > \ord_p(B)$. (Precisely: As $n$ approaches $0$ in  $\hat{\mathbb{Z}}$, the discovered prime $p$ approaches $\infty$ in $\bar{\R}=\R\cup \{\pm\infty\}$.)

If $n$ is sufficiently divisible, then $\frac{A^{n}-B}{B-1} = \frac{A^{n}-1}{B-1} - 1 \in \Z^{+}$, and (since $\gcd(A,4(B-1))=1$) in fact
$\frac{A^{n}-B}{B-1} \equiv -1 \pmod{4}$. By quadratic reciprocity (for the Jacobi symbol) and the first supplementary law,
\begin{align*} \leg{A}{(A^n-B)/(B-1)} &= (-1)^{(A-1)/2} \leg{(A^n-B)/(B-1)}{A}\\
&= \leg{-1}{A} \leg{-B(B-1)}{A} = \leg{-B(1-B)}{A} = -1.
\end{align*}
Thus, we can choose $p$ dividing $\frac{A^{n}-B}{B-1}$ with $\leg{A}{p}=-1$. Assuming that $n$ is even (which holds whenever $n$ is sufficiently divisible), we are in the situation described in the first paragraph of this section, and so $\ord_p(A) > \ord_p(B)$.

It remains to see that infinitely many distinct $p$ arise in this construction. For that, it is enough to show that if $p$ is a fixed prime and $n$ is sufficiently divisible, then $p$ does not divide $\frac{A^n-B}{B-1}$. If $p$ divides $A$, then $p \nmid A^n-B$ for any $n$, and so $p\nmid \frac{A^n-B}{B-1}$. So suppose $p\nmid A$. If $n$ is sufficiently divisible, $A^n \equiv 1\pmod{p(B-1)}$ and so $\frac{A^n-B}{B-1} \equiv -1\pmod{p}$. Hence, $p\nmid\frac{A^n-B}{B-1}$.

Now suppose that $A,B$ are odd and positive with $\leg{1-B}{A}=-1$. Again, $A$ is coprime to $B-1$. We look at primes dividing expressions of the form
\[ \frac{B A^{n}-1}{B-1}. \]
If $n$ is sufficiently divisible, then
\[ \frac{B A^{n}-1}{B-1} \in \Z^{+}, \quad\text{with}\quad \frac{B A^{n}-1}{B-1} \equiv 1\pmod{4}.  \]
Moreover,
\[ \leg{A}{(BA^n-1)/(B-1)} = \leg{(BA^n-1)/(B-1)}{A} = \leg{1-B}{A} = -1. \]
Hence, there is a prime divisor $p$ of $(BA^n-1)/(B-1)$ with $\leg{A}{p}=-1$. Assuming $n$ even, $1/B \equiv A^n \equiv (A^{n/2})^2$ mod $p$, and so (reasoning as in the first paragraph of this section)  $\langle 1/B\bmod{p}\rangle \subsetneq \langle A\bmod{p}\rangle$. Hence, $\ord_p A > \ord_p (1/B) = \ord_p B$. That infinitely many distinct $p$ arise follows from the observation that for any fixed $p$ not dividing $A$, and all $n$ that are sufficiently divisible, $\frac{BA^n-1}{B-1} \equiv \frac{B-1}{B-1} \equiv 1\pmod{p}$.

We turn now to (ii). To handle pairs $2,B$ with $B$ odd and positive, we look at $p$ dividing
\[ \frac{4\cdot 2^{n} - B}{|4-B|}. \]
Whenever $n$ is sufficiently divisible,
\[ \frac{4\cdot 2^{n} - B}{|4-B|}\in \Z^{+}, \quad\text{and}\quad \frac{4\cdot 2^{n} - B}{|4-B|} \equiv \pm 3\pmod{8}.\]
Thus, $\leg{2}{(4\cdot 2^n-B)/|4-B|} =-1$. Choose $p$ dividing $\frac{4\cdot 2^{n} - B}{|4-B|}$ with $\leg{2}{p}=-1$. Then $B\equiv 2^{n+2} \equiv (2^{(n/2+1)})^2\pmod{p}$, and so $\langle B\bmod{p}\rangle \subsetneq \langle 2\bmod{p}\rangle$. Hence, $\ord_p(2) > \ord_p(B)$. Infinitely many distinct $p$ arise this way since, for each fixed odd prime $p$ and all $n$ that are sufficiently divisible, $\frac{4\cdot 2^{n} - B}{|4-B|} \equiv \frac{4-B}{|4-B|} \equiv \pm 1\pmod{p}$.

We breeze over the proof of (iii), concerning pairs $A,2$ with $\leg{-1}{A}=-1$, since the argument parallels the ones already described. This time one looks at primes dividing $2 A^{n}-1$, with $n$ sufficiently divisible. If $\leg{-1}{A}=1$ but $\leg{-2}{A}=-1$, one considers prime divisors of $A^n-2$, with $n$ sufficiently divisible. We leave the details to the reader.

Finally we treat (iv). Let $A,B$ be coprime integers larger than $1$ with $B> A^4$. We look at primes dividing
\[ \frac{A^{4+n}-B}{B-A^4}. \]
For each prime $p$,
\[ v_p\left(\frac{A^{4+n}-B}{B-A^4}+1\right)= v_p(A^n-1) + v_p(A^4)-v_p(B-A^4). \]
If $p$ is fixed and $n$ is sufficiently divisible, then the right-hand side is positive and in fact exceeds $v_p(4A)$: If $p\mid A$, this is clear, since $v_p(B-A^4)=0$ while $v_p(A^4)> v_p(4A)$. If $p\nmid A$, we use that $v_p(A^n-1)$ can be made arbitrarily large by making $n$ sufficiently divisible. It follows that $\frac{A^{4+n}-B}{B-A^4}$ is an integer for all sufficiently divisible $n$ and that
\[ \frac{A^{4+n}-B}{B-A^4} \equiv -1\pmod{4A}. \]
Hence, $\leg{-4A}{(A^{4+n}-B)/(B-A^4)} = \leg{-4A}{-1} =-1$. (We have $\leg{-4A}{-1}=-1$ since $-4A$ is an example of a negative discriminant; one reference for this is \cite[\S9.3]{MV07}.) Choose a prime $p$ dividing $\frac{A^{4+n}-B}{B-A^4}$ with $\leg{-4A}{p}=-1$. Since $p \mid (-A)^{4+n}-B$ and $-A$ is not a square mod $p$, a familiar argument shows that $\ord_p(-A) > \ord_p(B)$. Our above calculation with valuations implies that if $p$ is fixed, then $v_p(\frac{A^{4+n}-B}{B-A^4})=0$ for all sufficiently divisible $n$, and so this construction produces infinitely many different primes.

\begin{rmks}\label{rmk:rmk1}\mbox{ }
\begin{enumerate}
\item[(i)] A slight variant of the proof of Theorem \ref{thm:dominant}(iv) establishes the following more general result. Let $A, B$ be integers larger than $1$. Let $r_0$ be a nonnegative integer such that $v_p(A^{r_0}) \ge v_p(B)$ for all primes $p$ dividing $A$, and let $r$ be an even integer with $r> r_0+3$. If $B > A^r$, then $-A,B$ is order-dominant.

    Using this result, it is straightforward to show that for each fixed $A>1$, and almost all positive integers $B$ (in the sense of asymptotic density), the pair $-A,B$ is order-dominant.
\item[(ii)] The cases discussed in Theorem \ref{thm:dominant} were chosen as  representative of the basic method, but there are pairs of positive integers not covered by the conditions of Theorem \ref{thm:dominant} which can be shown order-dominant by this same strategy. One such pair is $3,7$ (look at primes dividing $\frac{7\cdot 3^n-1}{2}$), and another is $2,6$ (look at primes dividing $2^{n+1}-3$).
\end{enumerate}
\end{rmks}

\section{Almost all pairs $A,2$ are order-dominant: Proof of Theorem \ref{thm:almostall}}\label{sec:almostall}
The basic idea for the proof of Theorem \ref{thm:almostall} is encapsulated in the next lemma. Let $F_n = 2^{2^{n}}+1$ (for $n=0,1,2,3\dots$), the $n$th \textsf{Fermat number}. It is well-known that the $F_n$ are pairwise relatively prime and that if $p$ is a prime divisor of $F_n$, where $n\ge 2$, then $\ord_{p}(2) = 2^{n+1}$ and $2^{n+2} \mid p-1$ (see pages 5, 84 of \cite{ribenboim96}).

\begin{lem}\label{lem:NAE} Suppose $A$ is a positive integer with the property that
\[ \leg{A}{F_n} = -1 \quad\text{for infinitely many positive integers $n$}. \]
Then $A, 2$ is order-dominant.
\end{lem}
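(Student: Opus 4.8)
The plan is to extract, for each $n$ with $\leg{A}{F_n}=-1$, a single prime divisor of $F_n$ that already witnesses $\ord_p(A) > \ord_p(2)$, and then to exploit the pairwise coprimality of the Fermat numbers to guarantee that infinitely many distinct such primes arise. Notably, this argument sidesteps the ``sufficiently divisible'' machinery used elsewhere; the inequality will fall out directly from a $2$-adic valuation comparison.

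First I would fix an $n \geq 2$ with $\leg{A}{F_n} = -1$ (there are infinitely many such $n$ by hypothesis). Since the Jacobi symbol factors as a product of Legendre symbols over the prime divisors of $F_n$, the product being $-1$ forces at least one prime $p \mid F_n$ with $\leg{A}{p} = -1$. In particular $p \nmid A$, so $A$ is a unit mod $p$, and $p$ is odd, so $2$ is a unit as well. By the quoted properties of Fermat numbers, $\ord_p(2) = 2^{n+1}$ and $2^{n+2} \mid p-1$.

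The crux is to pin down $v_2(\ord_p(A))$. Writing $s = v_2(p-1)$, the condition $\leg{A}{p} = -1$ says exactly that $A$ is a nonsquare mod $p$, which is equivalent to $v_2(\ord_p(A)) = s$: choosing a primitive root $g$ and writing $A \equiv g^a$, nonsquareness means $a$ is odd, so $\gcd(a, p-1)$ is odd and $\ord_p(A) = (p-1)/\gcd(a, p-1)$ carries the full power of $2$ dividing $p-1$. Since $2^{n+2} \mid p-1$ gives $s \geq n+2$, we conclude $2^{n+2} \mid \ord_p(A)$, and hence
\[ \ord_p(A) \geq 2^{n+2} > 2^{n+1} = \ord_p(2). \]

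Finally I would note that distinct values of $n$ yield distinct primes $p$, since a prime dividing $F_n$ cannot divide $F_{n'}$ for $n' \ne n$ by the pairwise coprimality of the Fermat numbers. As the hypothesis supplies infinitely many $n \geq 2$ with $\leg{A}{F_n} = -1$, this produces infinitely many primes $p$ with $\ord_p(A) > \ord_p(2)$, establishing the order-dominance of $A, 2$. The only genuinely substantive point is the middle step—that nonresiduosity of $A$ forces the entire $2$-part of $p-1$ into $\ord_p(A)$, which then beats $\ord_p(2) = 2^{n+1}$ precisely because $p-1$ is divisible by the strictly larger power $2^{n+2}$; everything else is bookkeeping.
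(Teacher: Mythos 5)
Your proof is correct and follows essentially the same route as the paper: pick a prime $p\mid F_n$ with $\leg{A}{p}=-1$, use the standard facts $\ord_p(2)=2^{n+1}$ and $2^{n+2}\mid p-1$, and conclude $v_2(\ord_p(A))=v_2(p-1)\ge n+2$, so $\ord_p(A)\ge 2^{n+2}>2^{n+1}=\ord_p(2)$. The only cosmetic differences are that the paper derives the $2$-adic valuation identity from Euler's criterion ($A^{(p-1)/2}\equiv -1\pmod p$) rather than a primitive-root computation, and gets infinitude from $p>2^{n+2}\to\infty$ rather than from the pairwise coprimality of the $F_n$; both variants are equally valid.
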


\begin{proof} Choose $n\ge 2$ with $\leg{A}{F_n}=-1$. There is a prime $p$ dividing $F_n$ with $\leg{A}{p}=-1$, and for this prime, $A^{(p-1)/2} \equiv -1\pmod{p}$. Hence, $\ord_p(A)$ divides $p-1$ but does not divide $\frac{p-1}{2}$, forcing $v_2(\ord_{p}(A)) = v_2(p-1)$. It follows that
\[ \ord_p(A) \ge 2^{v_2(p-1)} \ge 2^{n+2} > 2^{n+1} = \ord_p(2). \]
Since $p > \ord_p(A) \ge 2^{n+2}$, and $n$ can be chosen arbitrarily large, there are infinitely many $p$ with $\ord_p(A) > \ord_p(2)$.
\end{proof}

Primes $A$ \emph{failing} the hypothesis of Lemma \ref{lem:NAE} appear already in the literature;  M\"{u}ller \cite{muller07} calls these \textsf{anti-elite primes}. That is, $A$ is anti-elite if $\leg{A}{F_n}=1$ for all large enough positive integers $n$. We will call any integer $A$ satisfying this condition an \textsf{anti-elite integer}.

As M\"{u}ller observed, trivial changes to the proof of Theorem 4 in \cite{KLS02} show that anti-elite primes are sparse within the collection of all primes. Specifically, the count of anti-elite primes not exceeding $x$ is $O(x/(\log{x})^{3/2})$, for all $x\ge 2$.\footnote{A stronger upper bound of $O(x/(\log{x})^2)$ is claimed in \cite{KLS02}. Just \cite{just20} points out a small error in the proof and notes that, when corrected, $2$ must be replaced by $3/2$. In fact, one can recover an estimate almost as strong as originally claimed by a modification of the proof; see the end of our \S3.} In view of Lemma \ref{lem:NAE}, to prove Theorem \ref{thm:almostall} it is enough to show that only $o(x)$ positive integers $A\le x$ are anti-elite, as $x\to\infty$. We prove this in the following more precise form.

\begin{thm}\label{thm:aecount} For each $\epsilon > 0$ and all $x>x_0(\epsilon)$, the number of anti-elite $A \in (1,x]$ is $O_{\epsilon}(x/(\log{x})^{1-\epsilon})$. \end{thm}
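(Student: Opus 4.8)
The plan is to count anti-elite integers $A \in (1,x]$ by exploiting the defining congruence condition modulo Fermat numbers. Recall $A$ is anti-elite precisely when $\leg{A}{F_n}=1$ for all sufficiently large $n$. First I would fix a threshold parameter $N$ (to be optimized as a function of $x$) and observe that every anti-elite $A$ satisfies $\leg{A}{F_n}=1$ for all $n$ in some tail; but more usefully, being anti-elite forces $\leg{A}{F_n}=1$ simultaneously for a whole block of consecutive values of $n$, say for $n_0 < n \le n_0 + N$, once we are past the (finitely many) small exceptions. The idea is that each condition $\leg{A}{F_n}=1$ is a congruence constraint on $A$ modulo $F_n$ cutting the admissible residues roughly in half, and since the $F_n$ are pairwise coprime, these constraints are independent by the Chinese Remainder Theorem.

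The key computation is therefore to bound, for a fixed window of Fermat numbers, the density of $A$ satisfying all the quadratic-residue conditions. For each $n$ the set of $A \pmod{F_n}$ (among those coprime to $F_n$) with $\leg{A}{F_n}=1$ has relative density essentially $2^{-\omega(F_n)}$-adjusted, but the cleanest bound comes from noting that $\leg{\cdot}{F_n}=1$ selects at most half of the reduced residues modulo each prime power factor; so the proportion of $A$ in a complete residue system mod $\prod_{n_0 < n \le n_0+N} F_n$ satisfying all $N$ conditions is at most $2^{-N}$ (up to lower-order factors coming from $A$ sharing factors with the $F_n$). Multiplying by $x$ and adding an error term for the incomplete final block gives a count of order $x \cdot 2^{-N} + \prod_{n \le n_0 + N} F_n$, and since $\log F_n \asymp 2^n$, the modulus $\prod F_n$ grows doubly exponentially in $n_0+N$. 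I would then choose $N$ and $n_0$ so that the modulus stays below $x$ (forcing $2^{n_0+N} \ll \log x$, i.e. $n_0 + N \ll \log\log x$) while making $2^{-N}$ as small as possible.

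The main obstacle I anticipate is the tension between wanting $N$ large (to make $2^{-N}$ small) and the doubly-exponential size of the Fermat modulus, which severely restricts how large $n_0+N$ can be: since $\prod_{n\le n_0+N} F_n \approx 2^{2^{n_0+N+1}}$, keeping this below $x$ only permits $n_0 + N$ up to about $\log_2\log_2 x$, hence $N \le \log_2\log_2 x$, which would yield a saving of merely $2^{-\log_2\log_2 x} = 1/\log_2 x$ — a single logarithm, not a power. To reach the stated $x/(\log x)^{1-\epsilon}$ I expect one must avoid insisting on a full block of \emph{consecutive} Fermat numbers and instead work prime-by-prime: fix a parameter $y$, and for each prime $q$ dividing some $F_n$ with the relevant $2$-power order, impose the single residue condition at $q$. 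The quantitative input is that these Fermat prime factors $q$ are fairly plentiful below a suitable bound (each $F_n$ contributes a prime $\equiv 1 \pmod{2^{n+2}}$), and a sieve over this set of primes, each cutting the count by roughly $1/2$, produces a saving of the shape $\prod_{q}(1 - \tfrac12)$ over the relevant primes $q \le y$.

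Concretely, I would set up the final estimate as a sieve: let $\mathcal{Q}$ be a set of primes $q$, each dividing a distinct Fermat number, with $q \le y$ for an optimized $y = y(x)$; every anti-elite $A$ (past its finite exceptional set) must lie in a fixed residue class excluded at each $q \in \mathcal{Q}$, namely $\leg{A}{q}=1$ must hold. Applying an upper-bound sieve (Brun or the large sieve) over $\mathcal Q$ gives
\[
\#\{A \le x : A \text{ anti-elite}\} \ll x \prod_{q \in \mathcal Q}\Bigl(1 - \frac{1}{2}\Bigr) + (\text{error}),
\]
where the error is controlled provided the modulus $\prod_{q\in\mathcal Q} q$ is a small power of $x$. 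Since each $F_n$ supplies a prime factor, the number of usable primes $q \le y$ is at least of order $\log\log y$, but choosing $y$ as a small power of $x$ lets $|\mathcal Q|$ grow like a constant times $\log\log x$ while the modulus stays polynomially bounded. The product $\prod_{q\in\mathcal Q}(1-\tfrac12) = 2^{-|\mathcal Q|}$ then gives the saving, and careful bookkeeping of how many distinct Fermat prime factors lie below $y$ — together with the flexibility of the $\epsilon$ — should deliver the bound $x/(\log x)^{1-\epsilon}$. The delicate point, and the step I would spend the most care on, is verifying that the Fermat numbers genuinely contribute \emph{enough} distinct small prime factors to make $|\mathcal Q|$ large enough; this is exactly the content borrowed from the proof of Theorem 4 in \cite{KLS02}, and getting the exponent right is where the claimed $1-\epsilon$ (rather than a full power) comes from.
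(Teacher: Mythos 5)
There is a genuine gap, and it sits at the very first step: you never explain how to get from the defining \emph{tail} condition of anti-eliteness to residue conditions at \emph{small} indices $n$. Being anti-elite only says $\leg{A}{F_n}=1$ for all $n \ge n_0(A)$, where the threshold $n_0(A)$ depends on $A$ and is not bounded by any function of $x$. Your counting scheme (whether with a block of consecutive Fermat numbers or with a set $\mathcal{Q}$ of their prime factors) requires the conditions to hold at indices $n$ small enough that the moduli are $x^{o(1)}$, i.e.\ $n \ll \log\log x$; but nothing in your argument rules out that every anti-elite $A \le x$ has $n_0(A)$ far beyond that range, in which case the constraints you sieve on are simply not implied by anti-eliteness, and the proof does not start. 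The phrases ``once we are past the (finitely many) small exceptions'' and ``past its finite exceptional set'' conceal exactly this non-uniformity. The paper's proof supplies the missing idea: writing $A_1$ for the odd part of $A$ and $t=v_2(\lambda(A_1))$, it shows that the sequence $\{\leg{A}{F_n}\}_{n\ge t+2}$ is \emph{purely periodic} (because $\leg{A}{F_n}=\leg{F_n}{A_1}$ depends only on $2^n$ mod $\lambda(A_1)$, hence only on $n$ mod $\lambda(B)$ with $B$ the odd part of $\lambda(A_1)$). Pure periodicity converts ``eventually $1$'' into ``equal to $1$ for all $n\ge t+2$,'' which places the constraints at absolutely small indices; the stratum of $A$ with $t$ large (where this fails to help) is disposed of separately by Brun--Titchmarsh, since such $A$ have a prime factor $\equiv 1 \pmod{2^{t}}$. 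Without some substitute for this periodicity step, no choice of sieve can close the gap.

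A secondary but consequential error: you reject the ``block'' approach because it yields ``a single logarithm, not a power,'' but the target bound $x/(\log x)^{1-\epsilon}$ is \emph{weaker} than $x/\log x$, so a saving of one full logarithm (even with $(\log\log x)^{O(1)}$ losses, which the $\epsilon$ absorbs) is exactly what is needed --- indeed the paper's final bound is $O(x(\log\log x)^3/\log x)$. Your pivot to sieving over prime factors of Fermat numbers was therefore unnecessary, and it also buys nothing quantitatively: the usable primes $q\le y$ come from the roughly $\log_2\log_2 y$ Fermat numbers below $y$, so $|\mathcal{Q}|\ll \log\log x$ and $2^{-|\mathcal{Q}|}$ is again about $1/\log x$ --- the same single-logarithm saving you set out to improve.
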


\begin{proof} Write $A = A_0 A_1$, where $A_1$ is the largest odd divisor of $A$. We will assume that $v_2(\lambda(A_1)) < T-2$, where
\[ T := \left\lfloor \frac{\log(\log x/\log\log{x})}{\log{2}}\right\rfloor. \]
If $v_2(\lambda(A_1))\ge T-2$, then there is a prime $p$ dividing $A$ with $p \equiv 1\pmod{2^{T-2}}$, and the number of such $A\le x$ is
\[ \ll x \sum_{\substack{p \le x \\ p\equiv 1\pmod{2^{T-2}}}} \frac{1}{p} \ll x\frac{\log\log{x}}{2^{T-2}} \ll \frac{x (\log\log{x})^2}{\log{x}}, \]
which is $O_{\epsilon}(x/(\log{x})^{1-\epsilon})$. Here the sum on $p$ has been estimated by the Brun--Titchmarsh inequality \cite[Theorem 3.9, p.\ 90]{MV07} and partial summation.

We fix a nonnegative integer $t< T-2$ and count the number of anti-elite $A\in (1,x]$ with $v_2(\lambda(A_1))=t$. For each such $A$, the sequence $\{\leg{A}{F_n}\}_{n\ge t+2}$ is purely periodic. Indeed, if  $n\ge t+2$, then $n\ge 2$, so that $F_n \equiv 1\pmod{8}$ and $\leg{2}{F_n}=1$. Hence, $\leg{A}{F_n} = \leg{A_1}{F_n} = \leg{F_n}{A_1}$, which depends only on $F_n$ modulo $A_1$. In turn, $F_n = 2^{2^n} +1$ mod $A_1$ depends only on $2^n$ modulo $\lambda(A_1)$. Write
\[ \lambda(A_1) = 2^{v_2(\lambda(A_1))} B, \]
where $B$ is odd. Since $n\ge t= v_2(\lambda(A_1))$, the residue class of $2^n$ mod $\lambda(A_1)$ is determined by $2^n$ modulo $B$, which depends only on $n$ modulo $\lambda(B)$. Collecting our results, we see that $\{\leg{A}{F_n}\}_{n\ge t+2}$ is purely periodic (with period dividing $\lambda(B)$).

Since $A$ is anti-elite, it must be that each $F_n$ with $n\ge t+2$ satisfies $\leg{A}{F_n}=1$.  In particular,
\begin{equation}\label{eq:pfixed} \leg{A}{F_n}=1\quad\text{for all $n$ with}\quad t+2 \le n < T. \end{equation}
Factor $A=ps$, where $p$ is prime, $p\equiv 1\pmod{2^t}$. Our argument to bound the number of remaining $A$ assumes two different forms according to the sizes of $p$ and $s$.

Suppose first that $s\le \sqrt{x}$, so that $x/s\ge \sqrt{x}$. It follows from \eqref{eq:pfixed} that $p, s$ are prime to $\prod_{t+2 \le n < T} F_n$, and that
\begin{equation}\label{eq:sfixed}
 \leg{p}{F_n} =\leg{s}{F_n} \quad\text{whenever}\quad t+2 \le n < T.
\end{equation}
We view $s$ as fixed and count the number of corresponding $p$. Let $F = \prod_{t+2 \le n < T}F_n$. Keeping in mind that $p\equiv 1\pmod{2^t}$, we deduce from \eqref{eq:sfixed} that $p$ belongs to one of $\prod_{t+2 \le n < T} (\frac{1}{2}\phi(F_n)) = 2^{t-T+2} \phi(F)$ coprime residue classes modulo $2^t F$. (We use here that each symbol $\leg{\cdot}{F_n}$ is a nontrivial quadratic character mod $F_n$, since $F_n$ is not a square.) Notice that
\[ F < \prod_{n=0}^{T-1} F_n = F_{T}-2 < F_T. \]
So by our choice of $T$, and the inequality $t < T-2$, we have $2^t F < 2^t F_T = x^{o(1)}$ (as $x\to\infty$). Since $p =A/s \le x/s$,   the Brun--Titchmarsh inequality tells us that the number of possibilities for $p$ is $O(2^{-T} \frac{x}{s\log{x}})$. Summing on $s\le \sqrt{x}$ shows that the number of possible $A$ in this case is
\[ \ll \frac{x}{2^T} \ll \frac{x\log\log{x}}{\log{x}}. \]

Now suppose that $s > \sqrt{x}$. Then $p\le x/s < \sqrt{x}$. From \eqref{eq:pfixed}, we have with $m=A$ that
\[ \frac{1}{2^{T-t-2}} \prod_{t+2 \le n < T} \left(1+\leg{m}{F_n}\right) = 1. \]
Since the above left-hand side is nonnegative for every $m$, we conclude that an upper bound for the count of remaining $A$ is
\[ \frac{1}{2^{T-t-2}} \sum_{\substack{p\le \sqrt{x} \\ p \equiv 1\pmod{2^t}}}\sum_{s \le x/p} \prod_{t+2 \le  n< T} \left(1+\leg{sp}{F_n}\right).\]
Expanding the product and bringing the sums on $s,p$ inside gives a main term of size
\[ \frac{1}{2^{T-t-2}} \sum_{\substack{p\le \sqrt{x} \\ p \equiv 1\pmod{2^t}}}\sum_{s \le x/p} 1 \ll \frac{1}{2^{T-t}} x \sum_{\substack{p\le \sqrt{x} \\ p \equiv 1\pmod{2^t}}}\frac{1}{p} \ll \frac{x\log\log{x}}{2^T} \ll \frac{x(\log\log x)^2}{\log{x}}. \]
There are also $2^{T-t-2}-1$ error terms of the form $\frac{1}{2^{T-t-2}}\sum_{p,s} \leg{ps}{D}$, where $D$ is the product of some nonempty subset of $\{F_{t+2}, F_{t+3},\dots, F_{T-1}\}$.  Since Fermat numbers are pairwise coprime, $D$ is not a square, and $\leg{\cdot}{D}$ is a nontrivial Dirichlet character modulo $D$. Moreover, $D\le F = x^{o(1)}$. Using the trivial bound of $D$ for a nontrivial character sum mod $D$, we see that
\begin{align*} \sum_{\substack{p\le \sqrt{x} \\ p \equiv 1\pmod{2^t}}}\sum_{s \le x/p} \leg{ps}{D} &= \sum_{\substack{p\le \sqrt{x} \\ p \equiv 1\pmod{2^t}}}\leg{p}{D} \sum_{s \le x/p} \leg{s}{D} \\&\ll D \sum_{\substack{p \le\sqrt{x} \\ p \equiv 1\pmod{2^t}}}1 \ll D x^{1/2}. \end{align*} Hence, the errors contribute $\ll D x^{1/2} \ll x^{2/3}$. This is negligible compared to our main term, and so the number of $A$ that arise in this second case is $O(x(\log\log{x})^2/\log{x})$.

Assembling our results, we have proved that for each $t$, the number of corresponding $A$ is $O(x (\log\log{x})^2/\log{x})$. It remains to sum on $t$. But there are only $O(\log\log{x})$ possible values of $t$, and so the total number of anti-elite $A \le x$ is $O(x(\log\log{x})^3/\log{x})$, which is $O_{\epsilon}(x/(\log{x})^{1-\epsilon})$. \end{proof}

\begin{rmk}\label{rmk:rmk2} The anti-elite numbers up to $150$ are
\begin{multline*}1, \textbf{2}, 4, 8, 9, \textbf{13}, 15, 16,  \textbf{17}, 18, 21, 25, 26, 30, 32, 34, 35, 36, 42, 49, 50, 52, 60, \\64, 68, 70, 72, 81, 84, \textbf{97}, 98, 100, 104, 117, 120, 121, 123, 128, 135, 136, 140, 144. \end{multline*}
Anti-elite primes are shown in bold.
\end{rmk}

The proof of Theorem \ref{thm:aecount} is a more careful variant of the proof of Theorem 4 in \cite{KLS02}, the primary difference being that we keep track of the exact value of $t$ (the original argument only tracked whether $t$ was small or large, in a certain sense). Inserting this idea back into \cite{KLS02} will show that the count of elite primes up to $x$ is $O_\epsilon(x/(\log{x})^{2-\epsilon})$, essentially recovering the bound of $O(x/(\log{x})^2)$ claimed in \cite{KLS02}. Under GRH, the first author showed in \cite{just20} that the count of elite primes up to $x$ is $O_\epsilon(x^{5/6+\epsilon})$; the present method allows us to replace $5/6$ by $3/4$.

\section{Order-dominant pairs $A,-3$ and $A,3$: Proof of Theorem \ref{thm:cubicreciprocity}}
Let $\zeta = e^{2\pi i/3} = \frac{-1+\sqrt{-3}}{2}$. Below, we work in the ring $\Z[\zeta] = \O_K$, where $K=\Q(\zeta)$. Let $\lambda = 1-\zeta$, so that $\lambda^2 = -3\zeta$ and the ideal $(\lambda^2) = (3)$.

Take first the case when $A$ is even. Thinking of $n$ as sufficiently divisible (and in particular, even), we set
\[ \beta := A^{n/2} - \sqrt{-3} \]
and we attempt to evaluate the cubic residue symbol $\leg{A}{\beta}_3$. Since $\sqrt{-3} = 2\zeta+1$, we have
\begin{align} \beta &= A^{n/2}- 1 - 2\zeta\label{eq:repbeta}.\end{align} Since $3\nmid A$, for sufficiently divisible $n$ we find that $A^{n/2}\equiv 1\pmod{3}$, so that
\[ \beta \equiv -2\zeta \pmod {\lambda^2}. \]
Hence, $\zeta^2 \beta$ is congruent, modulo $\lambda^2$, to a rational integer coprime to $3$; that is, $\zeta^2\beta$ is \textsf{primary} in the sense required for an application of Eisenstein's $\ell$th power reciprocity law with $\ell=3$ (see, e.g., pp.\ 206--207 of \cite{IR90}). By that law, we deduce that (for sufficiently divisible $n$)
\[ \leg{A}{\beta}_3 = \leg{A}{\zeta^2 \beta}_3 = \leg{\zeta^2 \beta}{A}_3 = \leg{-\zeta^2 \sqrt{-3}}{A}, \]
so that
\[ \leg{A}{\beta}_3^2 = \leg{-3\zeta}{A}_3 = \leg{\lambda^2}{A}_3 = \leg{\lambda}{A}_3^2, \]
forcing $\leg{A}{\beta}_3 = \leg{\lambda}{A}_3$, since $\leg{A}{\beta}_3$ and $\leg{\lambda}{A}_3$ are third roots of unity. From the supplementary laws for Eisenstein reciprocity (see p.\ 365 of \cite{lemmermeyer00}),
\[ \leg{\lambda}{A}_3 = \leg{\zeta}{A}_3^{\frac{1}{2}(3-1)} = \leg{\zeta}{A}_3 = \zeta^{(A^2-1)/3}. \]
Since $A^2\not\equiv 1\pmod{9}$, the exponent on $\zeta$ is not a multiple of $3$. Thus, $\leg{A}{\beta}_3 \ne 1$. In particular, $A$ is not a cube modulo $\beta$, in $\Z[\zeta]$.

Since $A$ is even, we see from \eqref{eq:repbeta} that when $\beta$ is written as a $\Z$-linear combination of $1,\zeta$, the coefficient of $1$ and the coefficient of $\zeta$ are relatively prime. For any $\beta$ of this kind, a straightforward calculation shows that the canonical map $\Z \to \Z[\zeta]/(\beta)$ is surjective, and so induces an isomorphism $\Z/(N\beta) \cong \Z[\zeta]/(\beta)$. Thus, the calculation of the last paragraph implies that $A$ is not a cube modulo $N\beta = A^n+3$, in $\Z$. If $A$ were a cube modulo every prime factor of $A^n+3$, then $A$ would be a cube modulo $A^n+3$, by Hensel's lemma and the Chinese remainder theorem. (We use here that $A$ is prime to $A^n+3$, and that $3\nmid A^n+3$.) So we can choose a prime $p$ dividing $A^n+3$ with $A$ not a cube modulo $p$.

If $n$ is sufficiently divisible, then $3 \mid n$. Then $A^n \equiv -3\pmod{p}$ implies that $-3$ is a cube modulo $p$ and that $-3$ mod $p$ belongs to the subgroup generated by $A$ mod $p$. Since $A$ is not a cube mod $p$, we see $A$ is not in the subgroup generated by $-3$, and thus $\langle -3\bmod{p}\rangle \subsetneq \langle A\bmod{p}\rangle$. It follows that $\ord_p(A)/\ord_p(3)$ is a positive integer. To see that this integer is a multiple of $3$, notice that $p\equiv 1\pmod{3}$ (otherwise, $A$ would be a cube mod $p$), that $v_3(\ord_p(A)) = v_3(p-1)$ (since $A$ is a not a cube) and that $v_3(\ord_p(-3)) < v_3(p-1)$ (since $-3$ is a cube). Thus, $v_3(\ord_p(A)/\ord_p(3)) \ge 1$.

We have shown so far that if $n$ is sufficiently divisible, one can find a prime factor of $A^n+3$ with $\ord_p(A)/\ord_p(3)$ an integer multiple of $3$. To see that infinitely many distinct primes arise, notice that all of the $p$ produced by this construction are odd and coprime to $A$. Then observe that if $p$ is any fixed prime not dividing $2A$, then $A^n+3 \equiv 4 \not\equiv 0\pmod{p}$ whenever $n$ is sufficiently divisible.

The proof is essentially the same when $A$ is odd, except that now one should set $\beta := \frac{1}{2}(A^{n/2} - \sqrt{-3})$. It is also useful to observe that $\leg{2}{A}_3 = \leg{A}{2}_3 = \leg{1}{2}_3=1$. We leave the details to the reader.

\section{Equal orders in imaginary quadratic rings: Proof of Theorem \ref{thm:quadratic}}
Let $K$ be a quadratic field of discriminant $\Delta < 0$, and let $\alpha, \beta$ be distinct nonzero elements of $\O_K$, neither of which is a root of unity. Let $I$ be the largest ideal divisor of $(\beta-\alpha)$ coprime to $(\alpha)$. The prime ideals $P$ referred to in  the conclusion of Theorem \ref{thm:quadratic} will come to us as divisors of the (ideal) expression
\[  (\beta \alpha^\ell -1)/I, \]
where $\ell$ is a prime number for which $\ell+1$ is sufficiently divisible. It is important to note that any ``sufficiently divisible'' hypothesis on $\ell+1$ is always satisfied by infinitely many primes $\ell$; this follows, e.g., from Dirichlet's theorem on primes in progressions. (For an elementary proof of the $-1\bmod{M}$ case of Dirichlet's theorem used here, see \S50 of \cite{nagell51}.)

If $\ell+1$ is sufficiently divisible, then $\alpha^{\ell+1}\equiv 1\pmod{I}$, so that $ \alpha(\beta \alpha^{\ell}-1) \equiv \beta-\alpha\equiv 0 \pmod{I}$. Hence, $(\beta \alpha^\ell -1)/I$ is a nonzero, integral ideal of $\O_K$. Since $\Delta\mid \ell+1$ when $\ell+1$ is sufficiently divisible, $$\sqrt{\Delta}^{\ell} \equiv \Delta^{(\ell-1)/2} \sqrt{\Delta} \equiv \leg{\Delta}{\ell} \sqrt{\Delta} \equiv \leg{\Delta}{-1} \sqrt{\Delta} \equiv -\sqrt{\Delta} \pmod{\ell}.$$ So using a bar for complex conjugation (identified with the nontrivial automorphism of $K$), $\alpha^{\ell} \equiv \bar{\alpha} \pmod{\ell}$, and
\begin{align*} N((\beta \alpha^\ell -1)/I) &= N(\beta \alpha^{\ell}-1)/N(I) \\
&\equiv N(\beta \bar{\alpha} - 1)/N(I) \pmod \ell.\end{align*}
In the last line, division by $N(I)$ mod $\ell$ is to be understood as multiplication by the inverse of $N(I)$ mod $\ell$.  The rational number $N(\beta\bar{\alpha}-1)/N(I)$ exceeds $1$, since
\begin{align*}
   N(\beta\bar{\alpha}-1) - N(I) &\ge N(\beta \bar{\alpha} -1) - N(\beta-\alpha)
                               \\&= (\beta\bar{\alpha}-1)(\bar{\beta} \alpha-1) - (\beta-\alpha)(\bar{\beta} -\bar{\alpha})\\
                               &= (\beta\bar{\beta}-1)(\alpha\bar{\alpha}-1)= (N\alpha - 1) (N\beta-1) > 0.
\end{align*}
It follows that if $\ell+1$ is sufficiently divisible,
\[ N(\beta \bar{\alpha} - 1)/N(I) \not\equiv 1 \pmod \ell.\]
Thus, there must be a prime ideal $P$ of $\O_K$ dividing $(\beta \alpha^\ell -1)/I$ with $N(P) \not \equiv 1 \pmod{\ell}$, i.e., with $\ell \nmid (\O_K/P)^{\times}$. Since $\beta \alpha^{\ell}\equiv 1\mod{I}$, we deduce that $\langle \beta \bmod P\rangle = \langle \alpha^{-\ell} \bmod P\rangle = \langle \alpha \bmod P\rangle$.

To show that infinitely many such $P$ arise, we show that any fixed $P$ is coprime to $(\beta \alpha^{\ell} - 1)/I$ for all $\ell$ with $\ell+1$ sufficiently divisible. This is clear if $P \mid (\alpha)$. Otherwise, choose $k$ for which $P^k \parallel (\beta-\alpha)$. Then $P^k \parallel I$. Whenever $\ell+1$ is sufficiently divisible,
\[ \alpha(\beta \alpha^{\ell}-1) \equiv (\beta-\alpha) \pmod{P^{k+1}}, \]
which implies that $P^k \parallel (\beta \alpha^{\ell}-1)$. But then $P \nmid (\beta \alpha^{\ell}-1)/I$.

\section*{Acknowledgements}
The first author (M.J.) was supported by the UGA Algebraic Geometry, Algebra, and Number Theory RTG grant, NSF award DMS-1344994. The second author (P.P.) was supported by NSF award DMS-2001581. We thank Michael Filaseta, Pieter Moree,  Carl Pomerance, and Enrique Trevi\~no for helpful comments. We are also grateful to MathOverlow user \texttt{Hhhhhhhhhhh} for the post which brought this question to our attention \cite{mathoverflow20}.

\providecommand{\bysame}{\leavevmode\hbox to3em{\hrulefill}\thinspace}
\providecommand{\MR}{\relax\ifhmode\unskip\space\fi MR }
% \MRhref is called by the amsart/book/proc definition of \MR.
\providecommand{\MRhref}[2]{%
  \href{http://www.ams.org/mathscinet-getitem?mr=#1}{#2}
}
\providecommand{\href}[2]{#2}

\end{document}